\tikzset{level 1/.style={level distance=1.5cm, sibling distance=3.5cm}}
\tikzset{level 2/.style={level distance=1.5cm, sibling distance=2cm}}
\newtheorem{theorem}{Theorem}[section]
\newcommand{\definetheorem}[2]{%
  \newtheorem{#1}[theorem]{#2}%
  \expandafter\newcommand\csname #1autorefname\endcsname{#2}}
\newcommand\bee{\begin{equation}}
\newcommand\eeq{\end{equation}}
\newcommand{\CC}{{\mathbb{C}}}
\newcommand{\myoverline}[1]{\mathrlap{\overline{\phantom{#1}}}#1}
\newcommand{\mubar}{\myoverline{\mu}}
\newcommand{\nubar}{\myoverline{\nu}}
\DeclareMathOperator{\Tr}{Tr}
\DeclareMathOperator{\rank}{rank}
\DeclareMathOperator{\adj}{adj}
\DeclareMathOperator{\roots}{roots}
\numberwithin{equation}{section}
\newcommand{\yngantisym}{{\let\@nomath\@gobble\let\typeout\@gobble\tiny\yng(1,1)}}
\newcommand{\lie}{\mathfrak}
\newcommand{\FrobSch}{\operatorname{FS}}
\newcommand{\orthrootThm}{\beta}
\newcommand{\refrootLem}{\alpha}
\newcommand{\orthrootLem}{\beta}
\newcommand{\farrootLem}{\gamma}
\newcommand{\orthrootSec}{\beta}
\newcommand{\farrootSec}{\gamma}
\newcommand{\anyroot}{\alpha}
\newcommand{\fundef}[5]{
\entrymodifiers={+!!<0pt,\fontdimen22\textfont2>}
\xymatrix@R=3pt{\llap{$#1$\;\;} {#2} \ar@{->}[r] & {#3} \\ {#4} \ar@{|->}[r] & {#5}}
} 
\newcommand{\eps}{\varepsilon}
\title{Adjoints in symmetric squares of\\Lie algebra representations}
\author{Bruno Le Floch and Ilia Smilga\thanks{The second author was supported by the Simons Investigator Award 409735 from the Simons Foundation.}}
\date{\today}
\begin{document}

\maketitle

\begin{abstract}
  Given finite-dimensional complex representations $V$ and $V'$ of a simply-connected semisimple compact Lie group $G$, we determine the dimension of the $G$-invariant subspace of $\mathrm{adj}(G)\otimes V\otimes V'$, of $\mathrm{adj}(G)\otimes S^2 V$, and of $\mathrm{adj}(G)\otimes\Lambda^2 V$, where $\mathrm{adj}(G)$ is the adjoint representation.
  In other words we derive the multiplicity with which summands of $\mathrm{adj}(G)$ appear in a tensor product $V \otimes V'$ or (anti)symmetric square $S^2 V$ or $\Lambda^2 V$.
  We find in particular that the dimension of the $G$-invariant subspace of $\mathrm{adj}(G)\otimes S^2 V$ is larger than (resp.\ smaller or equal to) that of $\mathrm{adj}(G)\otimes\Lambda^2 V$ for a symplectic (resp.\ orthogonal) representation $V$.
\end{abstract}

\section{Introduction}

\subsection{Disclaimer}

After submitting this paper, we learned that most of its results are in fact already known \cite{BZ88, PY08}. We thank the anonymous referee for pointing this out to us. We decided to keep this paper online, adding this disclaimer, for continuity reasons (and because an arXiv paper cannot be completely deleted, anyway).  The only possibly novel result of this paper is the detailed classification in the case $\mu \neq \overline{\nu}$ of Theorem~\ref{thm:adjoints_in_tensor_square}, which does not warrant publication on its own. Had we found the relevant references earlier, this paper would have never been written.

\subsection{Motivation}

In gauge theories, only gauge-invariant operators are physically meaningful.
These arise mathematically as $G$-invariants inside tensor products of representations of the gauge group, a compact simple Lie group~$G$.
This paper arose from a study of exactly marginal deformations of a class of three-dimensional superconformal field theories in~\cite{LLF}.
There, the dimensions of $G$-invariant subspaces of $\adj(G)\otimes S^2 V$ and $\adj(G)\otimes\Lambda^2 V$ were compared for irreducible representations~$V$ of~$G$, at a physicist's level of rigour.
The present article provides a rigorous case by case analysis proving this inequality, which is stated as the equivalence (\ref{itm:V_mu_sympl}) $\iff$ (\ref{itm:dim_order}) in Corollary~\ref{reformulation_as_integrals}.

In the physical context of interest in \cite{LLF}, the two sides of the inequality (\ref{itm:dim_order}) count gauge-invariant operators obtained as products of one gauge multiplet (in $\adj(\mathfrak{g})$) and two matter multiplets (in $V$), with bosonic and fermionic statistics on the left and right-hand side, respectively. A bosonic and a fermionic gauge-invariant operators can assemble into so-called ``long'' representations of superconformal symmetry, and any remaining operator must be in a special ``short'' representation, whose number was sought.  The inequality understood in \cite{LLF} gave a lower bound on that number.
In this work, we provide explicit descriptions and dimensions of these $G$-invariant subspaces, and also treat the case of reducible representations by analysing $G$-invariant subspaces of more general tensor products $\adj(G)\otimes V\otimes V'$ for pairs of irreducible representations $V,V'$.

Along the way, we reproduce in our \autoref{thm:adjoints_in_tensor_square} the dimension of the $G$-invariant subspace of $\adj(G)\otimes V\otimes V$ already determined in \cite{KW}.  There, this dimension was resolved into contributions from $\adj(G)\otimes S^2 V$ and $\adj(G)\otimes\Lambda^2 V$ for all compact simple Lie groups except $E_6$, $E_8$, $F_4$.
The proof relied on diverse approaches:
Schur function techniques for $G=SU(k+1)$,
realizing $\adj(G)\otimes V\otimes V$ as a sub-representation of $(\Box\otimes V)^{\otimes 2}$ with $\Box$ being the defining representation for $G=SO(2k+1),USp(2k),SO(4k)$, or another suitable representation for $G=G_2,E_7$,
and extending $G=SO(4k+2)$ to $O(4k+2)$.
In the remaining cases of $E_6$, $E_8$, $F_4$, the authors of~\cite{KW} proposed conjectures based on limited data concerning specific representations.
Our \autoref{thm:adj_in_sym_or_antisym} provides a uniform result for all Lie groups, which we obtain straightforwardly by embedding an irreducible representation~$V$ as a highest-weight summand in a tensor product of representations whose dominant weights are multiples of some fundamental weights.
This allows us to establish the conjecture of~\cite{KW} on $E_8$ and~$F_4$ and disprove their conjecture on~$E_6$.

\subsection{Basic definitions}

In all of this paper, $\lie{g}$ is some semisimple compact Lie algebra, and $G$ is the simply-connected group with algebra $\lie{g}$. We choose in $\lie{g}$ a Cartan subalgebra $\lie{h}$, that we equip with the Killing form (and thus identify to $\lie{h}^*$).

We choose one of the closed Weyl chambers in $\lie{h}$ as the dominant one: we denote it by $\lie{h}^+$, and its interior by $\lie{h}^{++}$. We call $\alpha_1, \ldots, \alpha_r$ the corresponding simple positive roots (whose kernels are the walls of $\lie{h}^+$); we will use the Bourbaki numbering in the introduction and in Section~\ref{sec:square}, but switch to a special ad hoc numbering in Section~\ref{sec:dual_and_sym}. We call $\rho$ the Weyl vector of $\lie{g}$ (half-sum of the positive roots, or sum of the fundamental weights).

All representations of $\lie{g}$ under consideration will be finite-dimensional and complex. We call a weight $\mu$ (i.e.\ an element of $\lie{h}^*$) \emph{dominant} (resp.\ \emph{integral}) if all of the scalar products $\langle \mu, \alpha_i^\vee \rangle$ (where $\alpha^\vee := 2\alpha/\langle \alpha, \alpha \rangle$) are non-negative (resp.\ integer). For each dominant integral weight $\mu$, we denote by $V_\mu$ the irreducible representation of $\lie{g}$ with highest weight $\mu$. We also denote by $\adj(\lie{g})$ the adjoint representation.

Since $\lie{g}$ is compact, for each representation $V$ of $\lie{g}$, its complex-conjugate $\overline{V}$ and its dual $V^*$ are isomorphic (because $V$ has an invariant Hermitian form). We consequently denote by $\mu \mapsto \overline{\mu}$ the opposition involution, so that $V_{\overline{\mu}} \simeq \overline{V_\mu} \simeq V^*_\mu$ for all $\mu$.

Finally, we recall the following definition:
\begin{definition}
Two roots of $\mathfrak{g}$ are said to be \emph{weakly orthogonal} if they are orthogonal and their sum is also a root of $\mathfrak{g}$.
\end{definition}

\begin{remark}
We defined $\lie{g}$ as a compact Lie algebra (because of the physical motivation of this paper), but of course all of our results apply equally well to complex Lie algebras. The only place where we really use the compact structure is the reformulation in terms of integrals (items (\ref{itm:FrobSch_integral}) and (\ref{itm:char_integral}) in Corollary~\ref{reformulation_as_integrals}).
\end{remark}

\subsection{Main result}

In this paper, we prove two theorems.
The case $\nubar=\mu$ of the first theorem already appears as \cite[Prop.~1.4]{BZ88} and as \cite[Prop.~4.2]{KW}.
Our second theorem already appears as \cite[Thm.~3.3]{PY08}; it also contains Propositions 5.4, 6.3--6.5, 7.1--7.2 and Conjecture 7.3 from \cite{KW}, as well as a modification of their Conjecture 7.4.

\begin{theorem}[Adjoints in a tensor product] \label{thm:adjoints_in_tensor_square}
  Given a semisimple Lie algebra~$\mathfrak{g}$ and two dominant weights $\mu$ and $\nu$, the $\mathfrak{g}$-invariant subspace of $\adj(\mathfrak{g})\otimes V_\mu\otimes V_\nu$ has dimension
  \begin{equation} \label{eq:multiplicity_in_tensor_square}
    \dim\Bigl(\bigl(\adj(\mathfrak{g})\otimes V_\mu\otimes V_\nu\bigr)^{\mathfrak{g}}\Bigr)
    = \begin{cases}
      \#\bigl\{j\bigm|\langle\mu,\alpha_j^\vee\rangle > 0 \bigr\} & \text{if } \nubar=\mu , \\
      0 \text{ or } 1 \text{ (see below)} & \text{if } \nubar-\mu\in\roots(\mathfrak{g}) , \\
      0 & \text{otherwise}.
    \end{cases}
  \end{equation}
More precisely, if $\nubar-\mu$ is equal to some root $\orthrootThm\in\roots(\mathfrak{g})$, the dimension is generally equal to~$1$. It is equal to~$0$ only in the following cases:
\begin{enumerate}
\item \label{itm:weakly_orth_overall} $\mathfrak{g}$ has a factor of type $B_n$, $C_n$ or~$F_4$, $\langle \mu, \alpha_j^\vee \rangle = 0$ for some short simple root $\alpha_j$ of that factor, and $\orthrootThm$ is weakly orthogonal to $\alpha_j$. (See Table~\ref{tab:weak_orth_enum} for the complete list of possible values of $\alpha_j$ and $\orthrootThm$).
\item \label{itm:g2_1} $\mathfrak{g}$ has a factor of type $G_2$, $\langle \mu, \alpha_1^\vee \rangle = 0$, and $\orthrootThm = -(\alpha_1 + \alpha_2)$ or $\orthrootThm = 2 \alpha_1 + \alpha_2$ (where $\alpha_1$ and $\alpha_2$ denote the simple roots of that factor, with $\alpha_1$ being short).
\item \label{itm:g2_2} $\mathfrak{g}$ has a factor of type $G_2$, $\langle \mu, \alpha_1^\vee \rangle = 1$, and $\orthrootThm = \alpha_1 + \alpha_2$ or $\orthrootThm = -(2 \alpha_1 + \alpha_2)$ (with $\alpha_1$ and $\alpha_2$ as above).
\end{enumerate}
\end{theorem}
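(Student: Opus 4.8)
The plan is to convert the statement into a multiplicity computation and then into a finite signed sum over the Weyl group. First I would note that $\dim\bigl((\adj(\mathfrak{g})\otimes V_\mu\otimes V_\nu)^{\mathfrak{g}}\bigr)$ is the multiplicity $m$ of the trivial representation in $\adj(\mathfrak{g})\otimes V_\mu\otimes V_\nu$, which by self-duality of the adjoint equals the multiplicity of $\adj(\mathfrak{g})$ in $V_\mu\otimes V_\nu$, and also equals $\dim\Hom_{\mathfrak{g}}\bigl(V_{\nubar},\adj(\mathfrak{g})\otimes V_\mu\bigr)$. For semisimple $\mathfrak{g}=\prod_i\mathfrak{g}_i$ the adjoint splits as $\bigoplus_i\adj(\mathfrak{g}_i)$ and $V_\mu,V_\nu$ factorize, so $m$ becomes a sum over factors in which all but one factor merely imposes the constraint $\nubar_k=\mu_k$; this reduces the statement to a single simple factor, where $\adj(\mathfrak{g})=V_\theta$ is irreducible with $\theta$ the highest root. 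Since the weights of $V_\theta$ are exactly the roots (each with multiplicity one) together with the zero weight (multiplicity $r:=\rank\mathfrak{g}$), the Racah--Speiser formula (equivalently Brauer's or Klimyk's formula) gives, for $\kappa=\nubar$ dominant and $W$ the Weyl group,
\[
  \dim\Hom_{\mathfrak{g}}\bigl(V_\kappa,\adj(\mathfrak{g})\otimes V_\mu\bigr)
  = r\,\delta_{\kappa,\mu}+\sum_{w\in W}(-1)^{\ell(w)}\bigl[\,w(\kappa+\rho)-(\mu+\rho)\in\roots(\mathfrak{g})\,\bigr],
\]
where $[\cdots]\in\{0,1\}$. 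Writing $\xi:=\mu+\rho$ (which is strictly dominant and regular, since $\langle\xi,\alpha_j^\vee\rangle=\langle\mu,\alpha_j^\vee\rangle+1\ge 1$), the whole problem becomes a finite signed count of Weyl elements $w$ with $w(\nubar+\rho)-\xi\in\roots(\mathfrak{g})$.

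The trichotomy then reads off the $w=e$ term, whose value is $[\nubar-\mu\in\roots(\mathfrak{g})]$. If $\nubar=\mu$ this term vanishes, and each simple root $\alpha_j$ with $\langle\mu,\alpha_j^\vee\rangle=0$ gives $\xi-s_j\xi=\langle\xi,\alpha_j^\vee\rangle\alpha_j=\alpha_j\in\roots(\mathfrak{g})$, hence a contribution $-1$; adding the $r$ from the Kronecker term yields $m=r-\#\{j:\langle\mu,\alpha_j^\vee\rangle=0\}=\#\{j:\langle\mu,\alpha_j^\vee\rangle>0\}$, \emph{provided} no longer Weyl element contributes. If $\nubar=\mu+\orthrootThm$ with $\orthrootThm\in\roots(\mathfrak{g})$, the $w=e$ term contributes $+1$, giving the generic value $1$, and the value can drop to $0$ only if exactly one reflection $s_j$ contributes $-1$ (and nothing else). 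Finally, if $\nubar-\mu\notin\{0\}\cup\roots(\mathfrak{g})$, then $w=e$ contributes nothing, and using the strict dominance of $\nubar+\rho$ one checks that any contributing simple reflection $s_j$ would force $\langle\orthrootThm,\alpha_j^\vee\rangle\le-2$ and hence $\nubar-\mu\in\roots(\mathfrak{g})$ via an $\alpha_j$-string argument, a contradiction; so $m=0$.

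The conceptual obstacle is the claim invoked in all three regimes: \emph{among $w\neq e$, only single simple reflections can make $w(\kappa+\rho)-\xi$ a root}. I would prove this as a root-system lemma, writing $w(\kappa+\rho)-\xi=(w\xi-\xi)+w\orthrootThm$ and exploiting the identity $\lVert w(\kappa+\rho)\rVert=\lVert\kappa+\rho\rVert$ together with the strict dominance of $\xi$ to bound $\ell(w)$, then reducing the surviving possibilities to rank-$2$ subsystems where the assertion is verified directly. (A quick check already shows length-two elements $s_is_j$ cannot contribute, since $\xi-s_is_j\xi$ has both simple coordinates positive and the required root values are unattainable for $\langle\xi,\alpha_i^\vee\rangle,\langle\xi,\alpha_j^\vee\rangle\ge 1$.) This lemma is exactly where the distinction between simply-laced and non-simply-laced types enters, and it is the main structural step.

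The remaining work, which is the bulk of the labour and the part flagged as possibly novel, is the case-by-case classification in the regime $\nubar=\mu+\orthrootThm$. Here one computes, for a simple root $\alpha_j$,
\[
  s_j(\xi+\orthrootThm)-\xi=\orthrootThm-\bigl(\langle\mu,\alpha_j^\vee\rangle+\langle\orthrootThm,\alpha_j^\vee\rangle+1\bigr)\alpha_j ,
\]
so that $m=0$ precisely when this lies in $\roots(\mathfrak{g})$ for some $j$. Deciding when $\orthrootThm-c\,\alpha_j$ is again a root is an $\alpha_j$-string analysis: for $\alpha_j$ short with $\langle\mu,\alpha_j^\vee\rangle=0$ and $\orthrootThm$ weakly orthogonal to $\alpha_j$ one has $c=1$, and the symmetric string forces $\orthrootThm-\alpha_j\in\roots(\mathfrak{g})$, producing the $B_n,C_n,F_4$ families of Table~\ref{tab:weak_orth_enum}; the longer strings of $G_2$ yield the two exceptional families (for instance $\langle\mu,\alpha_1^\vee\rangle=0$ and $\orthrootThm=-(\alpha_1+\alpha_2)$ give $c=2$ and $\orthrootThm-2\alpha_1=-(3\alpha_1+\alpha_2)\in\roots(\mathfrak{g})$). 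Carrying this out uniformly over all short simple roots and all admissible $\orthrootThm$, and confirming that two reflections never contribute at once (which would break the ``$0$ or $1$'' dichotomy), completes the proof.
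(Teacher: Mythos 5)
Your skeleton is the same as the paper's: reduce to the multiplicity of $V_{\nubar}$ in $\adj(\mathfrak{g})\otimes V_\mu$, apply Racah--Speiser/Klimyk, and read off the $w=e$ term and the simple-reflection terms (the paper organizes the identical computation dually, reflecting each shifted weight $\rho+\mu+\gamma$ back into the dominant chamber rather than summing over $w$ for a fixed target $\nubar$). All the computations you make explicit are correct, including the regime-by-regime bookkeeping and the $G_2$ example. The genuine gap is the step you yourself flag as ``the main structural step'': that no $w$ with $\ell(w)\geq 2$ contributes. Your proposed proof of it does not go through as sketched. The identity $\lVert w(\kappa+\rho)\rVert=\lVert\kappa+\rho\rVert$ together with strict dominance of $\xi:=\mu+\rho$ gives, by itself, no bound on $\ell(w)$; what you would actually need is a separation statement (a regular integral weight in a chamber at gallery distance $\geq 2$ from the dominant chamber lies farther than any root length from every regular integral weight in the dominant chamber --- the assertion illustrated for $G_2$ in \autoref{fig:root-g2}), and that statement carries essentially the same content as the lemma you want, so invoking it without proof is circular. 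Moreover your one concrete ingredient, the ``quick check'' for $s_is_j$, computes $\xi-s_is_j\xi$ and therefore only covers the regime $\nubar=\mu$; in the regime $\nubar=\mu+\orthrootThm$ the relevant quantity is $s_is_j(\xi+\orthrootThm)-\xi$, about which the check says nothing. The paper's proof of this step is concrete and short: classify \emph{all} pairs of roots with pairing $\langle\gamma,\alpha_j^\vee\rangle\leq-2$ (Lemma~\ref{lem:prod_at_least_2}: $\gamma=-\alpha_j$, the weakly orthogonal configurations of $B_n$, $C_n$, $F_4$ of Table~\ref{tab:weak_orth_enum}, and two $G_2$ configurations), then verify case by case the inequality \eqref{eq:single_step_to_dominant}, which says that a \emph{single} simple reflection already places $\rho+\mu+\gamma$ in the closed dominant chamber; since $w(\nubar+\rho)$ is regular, this forces $w=s_j$, with no induction, norm estimate, or separate rank-two reduction needed.

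The same classification lemma is also missing from your treatment of the cancellation regime. Your root-string argument proves only the forward direction: that a short simple root $\alpha_j$ with $\langle\mu,\alpha_j^\vee\rangle=0$ and $\orthrootThm$ weakly orthogonal to it (or the $G_2$ configurations) \emph{does} produce a cancelling reflection. But the theorem asserts these are the \emph{only} cases where the dimension drops to $0$, and for that you must show that whenever $s_j(\xi+\orthrootThm)-\xi\in\roots(\mathfrak{g})$ the pair $(s_j(\xi+\orthrootThm)-\xi,\,\alpha_j)$ is one of those in Lemma~\ref{lem:prod_at_least_2}; your phrase ``all admissible $\orthrootThm$'' presupposes exactly this classification, with its length-ratio analysis separating $B_n,C_n,F_4$ from $G_2$, which is never stated or proved in your plan. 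One simplification you could have used: your worry about ``two reflections contributing at once'' is vacuous, because the quantity computed is a dimension, hence non-negative; once only $e$ and simple reflections survive, the value $1-\#\{\text{contributing }j\}\geq 0$ forces at most one contribution. In short: right approach and correct explicit computations, but the two classification-type facts that constitute the actual content of the paper's proof (Lemma~\ref{lem:prod_at_least_2} and the dominance check \eqref{eq:single_step_to_dominant}) are absent, and the strategy offered in their place would not deliver them.
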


\begin{table}
\caption{\label{tab:weak_orth_enum} Enumeration of roots $\orthrootThm$ that are weakly orthogonal to any short simple root $\alpha_j$ for simple $\lie{g}$ of type $B_n$, $C_n$ or~$F_4$. We follow Bourbaki \cite{BouGAL456} for the numbering and coordinate expressions of roots.}
\centering
\setlength{\tabcolsep}{.9\tabcolsep}
\begin{tabular}{ll@{\hspace{-20pt}}rl}
\toprule
$\mathfrak{g}$ & $j$ & $\alpha_j$ & $\orthrootThm$ \\
\midrule
$B_n$ & $n$ & $\eps_n$ & $\pm \eps_i$, $i \in \{1, \ldots, n-1\}$ \\
$C_n$ & $1, \ldots, n-1$ & $\eps_j - \eps_{j+1}$ & $\pm (\eps_j + \eps_{j+1})$ \\
$F_4$ & $3$ & $\eps_4$ & $\pm \eps_i$, $i \in \{1, 2, 3\}$ \\
$F_4$ & $4$ & $\frac{1}{2}(\eps_1 - \eps_2 - \eps_3 - \eps_4)$ & $\pm \left(\frac{1}{2}(\eps_1 + \eps_2 + \eps_3 + \eps_4) - \eps_i\right)$, $i \in \{2, 3, 4\}$ \\
\bottomrule
\end{tabular}
\end{table}

\begin{theorem}[Adjoints in a symmetric or antisymmetric square: Thm.~3.3 in \cite{PY08}]
\label{thm:adj_in_sym_or_antisym}
  Given a semisimple Lie algebra~$\mathfrak{g}$ and a dominant weight $\mu$, the $\mathfrak{g}$-invariant subspaces of $\adj(\mathfrak{g})\otimes S^2 V_\mu$ and $\adj(\mathfrak{g})\otimes \Lambda^2 V_\mu$ have the following dimensions.
  If $\mu=\mubar$,
  \begin{equation}
  \label{eq:adj_in_sym_or_antisym}
    \begin{aligned}
      \dim\Bigl(\bigl(\adj(\mathfrak{g})\otimes S^2V_\mu\bigr)^{\mathfrak{g}}\Bigr)
      & = \sum_{j\mid\langle\mu,\alpha_j^\vee\rangle > 0} \Bigl( \frac{1}{2}\delta_{\varpi_j\neq\overline{\varpi_j}} + \delta_{\varpi_j=\overline{\varpi_j}} \delta_{V_\mu\text{ symplectic}} \Bigr) ,
      \\
      \dim\Bigl(\bigl(\adj(\mathfrak{g})\otimes\Lambda^2V_\mu\bigr)^{\mathfrak{g}}\Bigr)
      & = \sum_{j\mid\langle\mu,\alpha_j^\vee\rangle > 0} \Bigl( \frac{1}{2}\delta_{\varpi_j\neq\overline{\varpi_j}} + \delta_{\varpi_j=\overline{\varpi_j}} \delta_{V_\mu\text{ orthogonal}} \Bigr) .
    \end{aligned}
  \end{equation}
  Otherwise these dimensions are zero.
\end{theorem}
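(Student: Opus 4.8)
The plan is to deduce both dimensions from Theorem~\ref{thm:adjoints_in_tensor_square} (which gives their \emph{sum}) together with a separate computation of their \emph{difference}, after first disposing of the non-self-dual case. If $\mu\neq\mubar$ then $V_\mu$ is not self-dual, and since $S^2V_\mu$ and $\Lambda^2V_\mu$ are summands of $V_\mu\otimes V_\mu$ it suffices to show $\dim\bigl((\adj(\mathfrak{g})\otimes V_\mu\otimes V_\mu)^{\mathfrak{g}}\bigr)=0$. By Theorem~\ref{thm:adjoints_in_tensor_square} with $\nu=\mu$ this is nonzero only if $\mubar-\mu\in\roots(\mathfrak{g})$, which I claim is impossible for dominant $\mu\neq\mubar$: putting $\beta=\mubar-\mu$, the opposition involution is an isometry so $\abs{\mubar}=\abs{\mu}$ and hence $\langle\beta,\mu+\mubar\rangle=0$; were $\beta$ a positive root, $\langle\beta,\beta^\vee\rangle=2$ would force $0=\langle\mu+\mubar,\beta^\vee\rangle=2\langle\mu,\beta^\vee\rangle+2$, i.e.\ $\langle\mu,\beta^\vee\rangle=-1<0$, contradicting dominance.

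Assume henceforth $\mu=\mubar$, so that $V_\mu$ carries a nondegenerate invariant form $B$ whose type is the Frobenius--Schur indicator $\FrobSch(V_\mu)=+1$ (orthogonal, $B$ symmetric) or $-1$ (symplectic, $B$ antisymmetric). From $V_\mu\otimes V_\mu=S^2V_\mu\oplus\Lambda^2V_\mu$, Theorem~\ref{thm:adjoints_in_tensor_square} (case $\nu=\mu$) gives the sum of the two dimensions as $\#\{j\mid\langle\mu,\alpha_j^\vee\rangle>0\}$, and one verifies this equals the sum of the two right-hand sides of \eqref{eq:adj_in_sym_or_antisym}, since a node with $\varpi_j\neq\overline{\varpi_j}$ contributes $\half+\half$ and a node with $\varpi_j=\overline{\varpi_j}$ contributes $\delta_{V_\mu\text{ sympl}}+\delta_{V_\mu\text{ orth}}=1$. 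It therefore remains to compute the difference
\[ D:=\dim\bigl((\adj(\mathfrak{g})\otimes S^2V_\mu)^{\mathfrak{g}}\bigr)-\dim\bigl((\adj(\mathfrak{g})\otimes\Lambda^2V_\mu)^{\mathfrak{g}}\bigr). \]

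The swap $\sigma$ of the two $V_\mu$-factors acts on the invariant space with $S^2$- and $\Lambda^2$-parts as its $\pm1$-eigenspaces, so $D=\Tr\bigl(\sigma\mid(\adj(\mathfrak{g})\otimes V_\mu\otimes V_\mu)^{\mathfrak{g}}\bigr)$. Inserting the Haar projector and using $\Tr\bigl(\sigma\,(g\otimes g)\mid V_\mu\otimes V_\mu\bigr)=\chi_{V_\mu}(g^2)$ yields
\[ D=\int_G\chi_{\adj(\mathfrak{g})}(g)\,\chi_{V_\mu}(g^2)\,dg. \]
I would evaluate this in the model $V_\mu\otimes V_\mu\cong\End(V_\mu)$ afforded by $B$, under which $S^2V_\mu$ and $\Lambda^2V_\mu$ become the $B$-(anti)symmetric endomorphisms, \emph{with the identification twisted by} $\FrobSch(V_\mu)$: the $B$-antisymmetric endomorphisms realise $\Lambda^2V_\mu$ in the orthogonal case and $S^2V_\mu$ in the symplectic case. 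Since $\mathfrak{g}$ preserves $B$, the tautological copy $\adj(\mathfrak{g})\hookrightarrow\End(V_\mu)$ is $B$-antisymmetric, which already forces the ``obvious'' adjoint into $\Lambda^2$ (orthogonal) or $S^2$ (symplectic). One thus gets $D=-\FrobSch(V_\mu)\cdot N$, where $N$ is the excess of $B$-antisymmetric over $B$-symmetric copies of $\adj(\mathfrak{g})$ in $\End(V_\mu)$; note $N$ is \emph{independent} of the orthogonal/symplectic alternative, and the theorem is equivalent to $N=\#\{j\mid\langle\mu,\alpha_j^\vee\rangle>0,\ \varpi_j=\overline{\varpi_j}\}$.

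To pin down $N$ I would realise $V_\mu$ as the highest-weight summand of $\bigotimes_{j:\langle\mu,\alpha_j^\vee\rangle>0}V_{\langle\mu,\alpha_j^\vee\rangle\varpi_j}$ and show that the count $N$ is additive over the nodes of the support: a self-dual node ($\varpi_j=\overline{\varpi_j}$) should contribute a single $B$-antisymmetric adjoint copy, whereas a conjugate pair $\{j,\overline{\jmath}\}$ should contribute one $B$-antisymmetric and one $B$-symmetric copy (accounting for the $\half+\half$), giving $N=\#\{\text{self-dual nodes in the support}\}$. Equivalently one may evaluate the integral via Klimyk's alternating-sum formula $D=\sum_{w\in W}\operatorname{sgn}(w)\,m_{\frac12(w(\theta+\rho)-\rho)}$ (one term per simple factor, with highest root $\theta$ and weight multiplicities $m_\lambda$ of $V_\mu$). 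The main obstacle is precisely this last step: establishing that the swap-eigenvalue of each adjoint highest-weight vector is governed \emph{locally} by its node, while the overall orthogonal/symplectic sign is carried \emph{globally} by $\FrobSch(V_\mu)$. I expect this to demand an explicit construction of these highest-weight vectors and a case analysis over the Dynkin type of each simple factor, in the same spirit as—and relying on—the proof of Theorem~\ref{thm:adjoints_in_tensor_square}; determining $\FrobSch(V_\mu)$ from $\mu$ then finishes the orthogonal/symplectic bookkeeping.
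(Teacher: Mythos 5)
Your preliminary reductions are sound. The non-self-dual case is handled correctly (though you treat only the possibility that $\beta=\mubar-\mu$ is a \emph{positive} root; the negative case needs the same computation with $\mu$ and $\mubar$ exchanged, using that $\mubar$ is also dominant). The check that the two right-hand sides of \eqref{eq:adj_in_sym_or_antisym} sum to $\#\{j\mid\langle\mu,\alpha_j^\vee\rangle>0\}$ is correct, as are the identity $D=\Tr\bigl(\sigma\bigm|(\adj(\mathfrak{g})\otimes V_\mu^{\otimes2})^{\mathfrak{g}}\bigr)=\int_G\chi_{\adj(\mathfrak{g})}(g)\,\chi_{V_\mu}(g^2)\,d\eta(g)$ and the reformulation $D=-\FrobSch(V_\mu)\,N$. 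But after these reductions the theorem is \emph{equivalent} to the equality $N=\#\{j\mid\langle\mu,\alpha_j^\vee\rangle>0,\ \varpi_j=\overline{\varpi_j}\}$, and that is exactly the step you do not prove: you state the expected node-by-node contributions (self-dual node $\Rightarrow$ one $B$-antisymmetric adjoint copy; dual pair $\Rightarrow$ one copy of each type) and then concede that establishing this is ``the main obstacle'', expecting it to require an explicit construction and a case analysis over Dynkin types. Nothing in your text proves the local swap-eigenvalue claim, nor the linear independence of the adjoint copies attached to distinct nodes, nor does it rule out further invariants unaccounted for by nodes --- this last point does follow from Theorem~\ref{thm:adjoints_in_tensor_square}, but only once $m$ linearly independent invariants with known symmetries have actually been exhibited. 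So the proposal is a correct reduction wrapped around an unproven core; the gap is genuine.

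For what it is worth, the paper fills precisely this gap, and does so uniformly --- your expectation that a case analysis over the Dynkin type of each simple factor is needed is not borne out. It realises $V_\mu$ inside $W=\bigotimes_{i=1}^m V_{k_i\varpi_i}$ (as you propose), defines $\phi_i^\flat\colon\adj(\mathfrak{g})\otimes W^{\otimes2}\to\CC$ by acting with $x$ on the $i$-th factor and contracting all factors with duality isomorphisms $s_i\colon V_{k_i\varpi_i}\to V_{k_i\overline{\varpi_i}}^*$, and computes in a few lines that $\phi_i^\flat(x\otimes w\otimes v)=-\FrobSch(\mu)\,\phi^\flat_{\overline{i}}(x\otimes v\otimes w)$. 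The only inputs are Schur's lemma (giving $s_i^*=\eps_i s_{\overline{i}}$), a normalization making $\eps_i=1$ on dual pairs, and the identity $\prod_i\eps_i=\FrobSch(\mu)$, which follows from multiplicativity of the Frobenius--Schur indicator together with $\FrobSch(\varpi+\overline{\varpi})=+1$; this is precisely how the local node data and the global sign $\FrobSch(\mu)$ get disentangled, which is the point your sketch leaves open. Linear independence of the restrictions $\psi_i$ to $\adj(\mathfrak{g})\otimes V_\mu^{\otimes2}$ is then obtained by evaluating at $x\otimes v\otimes w$ with $x\in\lie{h}$, $v$ a highest and $w$ a lowest weight vector of $W$ (both lie in the summand $V_\mu$), giving $\psi_i(x\otimes v\otimes w)=k_i\varpi_i(x)\,C$ with $C\neq0$, and Theorem~\ref{thm:adjoints_in_tensor_square} shows these $m$ maps exhaust all invariants. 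Your alternative route via the Adams-operation (Klimyk-type) formula for the integral is plausible but equally unexecuted; the Frobenius--Schur bookkeeping above is the shortest way to close the argument.
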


As announced above, we use the latter theorem to derive the following corollary, whose first half (the equivalence (\ref{itm:V_mu_sympl}) $\iff$ (\ref{itm:dim_order})) is used in \cite{LLF}. The second half (the equivalence (\ref{itm:FrobSch_integral}) $\iff$ (\ref{itm:char_integral})) is simply a reformulation that we find elegant.

\begin{corollary}
\label{reformulation_as_integrals}
  Under the assumptions of \autoref{thm:adj_in_sym_or_antisym}, the following four properties are equivalent:
  \begin{enumerate}[(i)]
    \item \label{itm:V_mu_sympl} $V_\mu$ is symplectic;
    \item \label{itm:dim_order} $\displaystyle \dim\Bigl(\bigl(\adj(\mathfrak{g})\otimes S^2V_\mu\bigr)^{\mathfrak{g}}\Bigr)
      > \dim\Bigl(\bigl(\adj(\mathfrak{g})\otimes\Lambda^2V_\mu\bigr)^{\mathfrak{g}}\Bigr)$;
    \item \label{itm:FrobSch_integral} $\displaystyle \int_G \chi_{V_\mu}(g^2) d\eta(g) < 0$;
    \item \label{itm:char_integral} $\displaystyle \int_G \chi_{\adj(\mathfrak{g})}(g) \chi_{V_\mu}(g^2) d\eta(g) > 0$.
  \end{enumerate}
Here $\chi_V$ denotes the character of a representation $V$, and $\eta$ is the Haar measure on $G$.
\end{corollary}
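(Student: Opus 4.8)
The plan is to peel the four conditions into two pairs, each of which collapses by elementary character theory, and then to glue the pairs together using \autoref{thm:adj_in_sym_or_antisym}. Since (\ref{itm:V_mu_sympl}) already forces $\mu=\mubar$, I would first dispose of the case $\mu\neq\mubar$: there $V_\mu$ is neither symplectic nor orthogonal, both dimensions in (\ref{itm:dim_order}) vanish by \autoref{thm:adj_in_sym_or_antisym}, and both integrals vanish as well (being, as we see next, equal to $\FrobSch(V_\mu)=0$ and to a difference of zero dimensions), so all four statements are false and hence vacuously equivalent. From now on I assume $\mu=\mubar$.

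For the first pair I would use $\chi_{V_\mu}(g^2)=\chi_{S^2V_\mu}(g)-\chi_{\Lambda^2V_\mu}(g)$. Integrating gives $\int_G\chi_{V_\mu}(g^2)\,d\eta(g)=\dim(S^2V_\mu)^{\gfrak}-\dim(\Lambda^2V_\mu)^{\gfrak}=\FrobSch(V_\mu)$, which is $-1$ exactly when $V_\mu$ is symplectic; this is (\ref{itm:V_mu_sympl}) $\iff$ (\ref{itm:FrobSch_integral}). For the second pair I would combine the same identity with $\int_G\chi_A(g)\chi_B(g)\,d\eta(g)=\dim(A\otimes B)^{\gfrak}$, applied to $A=\adj(\gfrak)$ and $B=S^2V_\mu,\Lambda^2V_\mu$, to obtain
\[
  \int_G\chi_{\adj(\gfrak)}(g)\,\chi_{V_\mu}(g^2)\,d\eta(g)
  =\dim\bigl(\adj(\gfrak)\otimes S^2V_\mu\bigr)^{\gfrak}-\dim\bigl(\adj(\gfrak)\otimes\Lambda^2V_\mu\bigr)^{\gfrak},
\]
which yields (\ref{itm:dim_order}) $\iff$ (\ref{itm:char_integral}). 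Both pairs are routine.

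To bridge them I would prove (\ref{itm:V_mu_sympl}) $\iff$ (\ref{itm:dim_order}). Subtracting the two lines of (\ref{eq:adj_in_sym_or_antisym}) shows that the difference above equals $+N$ if $V_\mu$ is symplectic and $-N$ if it is orthogonal, where $N:=\#\{j\mid\langle\mu,\alpha_j^\vee\rangle>0,\ \varpi_j=\overline{\varpi_j}\}$. Thus (\ref{itm:dim_order}) amounts to "$V_\mu$ symplectic \emph{and} $N\geq1$," so the implication (\ref{itm:dim_order}) $\Rightarrow$ (\ref{itm:V_mu_sympl}) is free (a positive difference forces the symplectic sign). The hard part — and the only genuinely nontrivial step — is the converse: the claim that a symplectic $V_\mu$ necessarily activates a self-conjugate fundamental weight, so that $N\geq1$. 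I expect this claim to be the crux of the whole corollary, everything else being bookkeeping around \autoref{thm:adj_in_sym_or_antisym}, which on its own only delivers $\pm N$.

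I would establish the claim by contraposition. If $N=0$ then, since $\mu=\mubar$ pairs up the active fundamental weights, $\mu$ is a nonnegative integer combination of the self-dual weights $\varpi_j+\overline{\varpi_j}$ taken over conjugate pairs $j\neq\bar j$. Consequently $V_\mu$ is the Cartan component — highest weight, multiplicity one — of the tensor product $W$ of the corresponding copies of $\End V_{\varpi_j}\cong V_{\varpi_j}\otimes V_{\varpi_j}^*$. Each factor carries the $\gfrak$-invariant \emph{symmetric} nondegenerate trace form $(A,B)\mapsto\Tr(AB)$, so $W$ carries their tensor product and is orthogonal; a self-dual summand occurring with multiplicity one in an orthogonal representation inherits a nonzero invariant symmetric form (the form of $W$ restricts nondegenerately to it), hence is itself orthogonal. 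This forces $V_\mu$ to be orthogonal, contradicting symplecticity and proving $N\geq1$. As an alternative to this multiplicity-one argument one could invoke the classical formula $\FrobSch(V_\mu)=(-1)^{\langle\mu,2\rho^\vee\rangle}$ together with the opposition-invariance of $\rho^\vee$ to reach the same conclusion.
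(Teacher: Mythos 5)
Your proof is correct, and its overall skeleton coincides with the paper's: the equivalence (\ref{itm:V_mu_sympl}) $\iff$ (\ref{itm:FrobSch_integral}) is the classical Frobenius--Schur fact, the equivalence (\ref{itm:dim_order}) $\iff$ (\ref{itm:char_integral}) follows from $\chi_{S^2V_\mu}(g)-\chi_{\Lambda^2V_\mu}(g)=\chi_{V_\mu}(g^2)$ together with orthogonality of characters, the implication (\ref{itm:dim_order}) $\implies$ (\ref{itm:V_mu_sympl}) is immediate from \autoref{thm:adj_in_sym_or_antisym}, and both treatments identify the same crux: a symplectic $V_\mu$ must satisfy $\langle\mu,\alpha_j^\vee\rangle>0$ for some self-dual $\varpi_j$. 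Where you genuinely diverge is in how that crux is established. The paper does it in one line: if no self-dual fundamental weight is active, then $\mu$ is a sum of terms $\varpi_i+\overline{\varpi_i}$, each of Frobenius--Schur indicator $+1$, and multiplicativity of the indicator (the same standard inputs it uses around \eqref{eq:Frob_Schur_identity} in the proof of \autoref{thm:adj_in_sym_or_antisym}) forces $V_\mu$ to be orthogonal. You instead prove the needed instance of that fact from scratch: you realize $V_\mu$ as the multiplicity-one Cartan component of $W=\bigotimes_j\bigl(V_{\varpi_j}\otimes V_{\varpi_j}^*\bigr)^{\otimes k_j}$ (product over representatives of the dual pairs), equip $W$ with the tensor product of the symmetric, nondegenerate, invariant trace forms on $\End(V_{\varpi_j})$, and invoke the standard fact that such a form restricts nondegenerately to a self-dual summand occurring with multiplicity one, so that $V_\mu$ inherits an invariant symmetric form and is orthogonal. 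This argument is sound (as is your fallback via $\FrobSch(V_\mu)=(-1)^{\langle\mu,2\rho^\vee\rangle}$ and the opposition-invariance of $\rho^\vee$); what it buys is self-containedness, replacing the appeal to multiplicativity of the Frobenius--Schur indicator by an explicit construction, at the cost of being longer than the paper's one-line citation of known facts. Your preliminary case split $\mu\neq\mubar$ is harmless but unnecessary: there all four statements are false, and the paper's chain of implications covers this uniformly without separating it out.
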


\begin{proof}
The equivalence (\ref{itm:V_mu_sympl}) $\iff$ (\ref{itm:FrobSch_integral}) is classical \cite[\S II.6]{BtD}: in fact, the left-hand side of the inequality in (\ref{itm:FrobSch_integral}) is precisely the Frobenius-Schur indicator (see \eqref{eq:Frob_Schur_def}).

The implication (\ref{itm:dim_order}) $\implies$ (\ref{itm:V_mu_sympl}) immediately follows from the theorem. For (\ref{itm:V_mu_sympl}) $\implies$ (\ref{itm:dim_order}), it suffices to show that, for symplectic $V_\mu$, the $\delta_{\varpi_j=\overline{\varpi_j}}$ term in~\eqref{eq:adj_in_sym_or_antisym} is non-vanishing.  If it vanished, then $\langle\mu,\alpha_j^\vee\rangle=0$ for all self-dual weights~$\varpi_j$, so that $\mu$ would consist of non-self-dual weights only, hence would be a sum of terms of the form $\varpi_i+\overline{\varpi_i}$.  Since all such terms have Frobenius-Schur indicator equal to $+1$, the representation $V_\mu$ would then be orthogonal instead of symplectic.

To show the equivalence (\ref{itm:dim_order}) $\iff$ (\ref{itm:char_integral}), we start from the explicit expressions of characters of $S^2V_\mu$ and $\Lambda^2V_\mu$,
\begin{equation}
  \chi_{S^2V_\mu}(g) = \frac{1}{2} \Bigl( \chi_{V_\mu}(g)^2 + \chi_{V_\mu}(g^2) \Bigr) ,
  \qquad
  \chi_{\Lambda^2V_\mu}(g) = \frac{1}{2} \Bigl( \chi_{V_\mu}(g)^2 - \chi_{V_\mu}(g^2) \Bigr) .
\end{equation}
Their difference is $\chi_{V_\mu}(g^2)$.  By multiplicativity of characters, the integrand in~(\ref{itm:char_integral}) is the difference $\chi_{\adj(\mathfrak{g})\otimes S^2V_\mu}(g) - \chi_{\adj(\mathfrak{g})\otimes\Lambda^2V_\mu}(g)$.  Integrating a character against the Haar measure yields the dimension of the $\mathfrak{g}$-invariant subspace, which concludes the proof.
\end{proof}

The remaining two sections are devoted to proving the two theorems, respectively.
We also specialize \autoref{thm:adj_in_sym_or_antisym} to each classical and exceptional group in \autoref{ssec:KW} to reproduce the results and test the conjectures of~\cite{KW}.

\section{Proof of Theorem~\ref{thm:adjoints_in_tensor_square}: counting in the tensor product}
\label{sec:square}

\subsection{The Racah--Speiser algorithm}

We seek the multiplicity of the trivial representation in $\adj(\mathfrak{g})\otimes V_\mu \otimes V_\nu$. We will instead compute the multiplicity of $V_{\nubar}$ in $\adj(\mathfrak{g})\otimes V_\mu$, which is equal (decompose the latter representation into irreducibles, and apply Schur's lemma).

We start by recalling the so-called Racah--Speiser algorithm \cite{Rac64, Spe64}, also known as the Brauer--Klimyk algorithm \cite{Bra37, Kli68}, to compute the tensor product of two irreducible representations (here $\adj(\mathfrak{g})$ and $V_\mu$). This subsection follows the logic developed at the end of \cite[\S A.1]{LLF}.

The starting point is the Weyl character formula for the infinitesimal character of $V_\mu$ (still denoted by~$\chi_\mu$, but defined on $\lie{h}$ instead of $G$, by the formula $\chi_\mu(x) \coloneqq \Tr_{V_\mu}(x)$):
\begin{equation}\label{chimuexpr}
  \chi_\mu(x) = \frac{\sum_{\sigma\in W} (-1)^\sigma e^{(\sigma(\rho+\mu))\cdot x}}{\sum_{\sigma\in W} (-1)^\sigma e^{\sigma(\rho)\cdot x}}
\end{equation}
where $W$ is the Weyl group and $(-1)^\sigma$ denotes the signature of $\sigma$.
We use this expression to extend the definition of $\chi_\mu$ to arbitrary $\mu$ (although if $\mu$ is not a dominant weight, $\chi_\mu$ is no longer the character of any representation).

Multiplicativity of infinitesimal characters implies that the infinitesimal character of $\adj(\mathfrak{g})\otimes V_\mu$ is
\begin{equation}\label{char-adj-Vmu}
  \begin{aligned}
    \chi_{\adj(\mathfrak{g})\otimes V_\mu}(x) & = \chi_{\adj(\mathfrak{g})}(x) \chi_\mu(x) \\
    & = \frac{\sum_{\sigma\in W} (-1)^\sigma e^{(\sigma(\rho+\mu))\cdot x}\bigl( \rank(\mathfrak{g}) + \sum_{\anyroot\in\roots(\mathfrak{g})} e^{\anyroot\cdot x} \bigr)}{\sum_{\sigma\in W} (-1)^\sigma e^{\sigma(\rho)\cdot x}}
    \\
    & = \rank(\mathfrak{g}) \chi_\mu(x)
    + \sum_{\anyroot\in\roots(\mathfrak{g})} \frac{\sum_{\sigma\in W} (-1)^\sigma e^{(\sigma(\rho+\mu+\anyroot))\cdot x}}{\sum_{\sigma\in W} (-1)^\sigma e^{\sigma(\rho)\cdot x}} ,
  \end{aligned}
\end{equation}
where we have changed $e^{\anyroot\cdot x}$ to~$e^{\sigma(\anyroot)\cdot x}$ in the sum thanks to the Weyl group invariance of $\roots(\mathfrak{g})$.
This reduces to a sum of $\dim \mathfrak{g}$ (virtual) characters:
\begin{equation}\label{char-adj-Vmu-2}
  \chi_{\adj(\mathfrak{g})\otimes V_\mu}(x)
  = \rank(\mathfrak{g}) \chi_\mu(x) + \sum_{\anyroot\in\roots(\mathfrak{g})} \chi_{\mu+\anyroot}(x) .
\end{equation}

For $\mu$ deep enough in the dominant Weyl chamber, all shifted weights $\mu+\anyroot$ are dominant and this yields the decomposition into irreducible representations,
\begin{equation}\label{adjGammaVmu}
  \adj(\mathfrak{g})\otimes V_\mu = (V_\mu)^{\rank(\mathfrak{g})} \oplus \bigoplus_{\anyroot\in\roots(\mathfrak{g})} V_{\mu+\anyroot} , \quad
  \text{if all $\mu+\anyroot$ are dominant.}
\end{equation}
For $\mu$ near the boundary of the Weyl chamber, some $\mu+\anyroot$ exit the Weyl chamber and $V_{\mu+\anyroot}$ is meaningless.
To make sense of the term $\chi_{\mu+\anyroot}$ one should map the weight $\mu+\anyroot$ into the dominant Weyl chamber using a Weyl reflection.  An easy consequence of~\eqref{chimuexpr} is that characters flip signs under such reflections,
\begin{equation}
  \chi_{-\rho+\sigma(\rho+\nu)}(x) = (-1)^\sigma \chi_\nu(x) .
\end{equation}
In particular, if $\nu$ is a fixed point of such a reflection (namely is on a Weyl chamber boundary), then $\chi_\nu(x)$ vanishes.
This allows to write all summands in~\eqref{adjGammaVmu} in terms of characters of dominant weights and deduce the decomposition into irreducible representations.

\subsection{Generic situation and cancellations}
\label{sec:generic_and_cancellations}

More precisely, consider any root of $\lie{g}$, henceforth denoted by $\farrootSec$ instead of $\anyroot$ (this will help avoid confusion later, when we will apply Lemma~\ref{lem:prod_at_least_2}). Then one of the following five cases could in principle occur (compare the discussion following \cite[(A.5)]{LLF}, and see also \autoref{fig:root-g2}):
\begin{enumerate}
\item \label{itm:default} $\rho + \mu + \farrootSec \in \mathfrak{h}^{++}$, or equivalently $\mu + \farrootSec \in \mathfrak{h}^+$. In this case $\chi_{\mu + \farrootSec}$ is the character of an actual representation $V_{\mu + \farrootSec}$.
\item \label{itm:suicide} $\rho + \mu + \farrootSec$ lies on a wall of the dominant Weyl chamber, i.e.\ in $\mathfrak{h}^+ \setminus \mathfrak{h}^{++}$. It is then invariant by the reflection with respect to some root, which has signature $-1$, so $\chi_{\mu + \farrootSec} = -\chi_{\mu + \farrootSec} = 0$ (compare \cite[(A.6)]{LLF}).
\item \label{itm:kamikaze} $\rho + \mu + \farrootSec$ lies in some open Weyl chamber neighboring the dominant one, i.e.\ in $s_{\alpha_j} \mathfrak{h}^+$ for some simple root $\alpha_j$. In this case we have $\chi_{\mu + \farrootSec} = -\chi_{\mu + \orthrootSec}$, where $\orthrootSec := s_{\alpha_j}(\rho + \mu + \farrootSec) - \rho - \mu$. We shall see that in fact $\orthrootSec$ is always a root or $0$, so $\chi_{\mu + \farrootSec}$ cancels the simple summand $V_{\mu + \orthrootSec}$ (or decrements its multiplicity).
\item \label{itm:further_suicide} $\rho + \mu + \farrootSec$ lies on a wall of some nondominant Weyl chamber. This of course has the same outcome as case \ref{itm:suicide} (namely $\chi_{\mu + \farrootSec} = 0$), but is listed separately for clarity.
\item \label{itm:further_chamber} $\rho + \mu + \farrootSec$ lies in some further image of the open dominant Weyl chamber. We shall however see that in practice, this never happens.
\end{enumerate}
Now note that, by assumption, we already know that $\nu$, hence also $\nubar$, is dominant. So $\rho + \nubar$ itself always falls in case~\ref{itm:default} above; and we only need to rule out case~\ref{itm:further_chamber}, and to find which weights are cancelled by weights coming from case~\ref{itm:kamikaze}.

In other terms, it suffices to list all the weights of the form $\rho + \mu + \farrootSec$ for which we have
\begin{equation} \label{eq:negative_p}
-p := \langle \rho + \mu + \farrootSec, \alpha_j^\vee \rangle < 0
\end{equation}
for some simple root $\alpha_j$, which means that they are in cases \ref{itm:kamikaze}, \ref{itm:further_suicide} or possibly~\ref{itm:further_chamber}; and, for every such weight:
\begin{itemize}
\item We need to compute the cancelled weight
\begin{equation}
\label{eq:orthrootSec_def}
\mu + \orthrootSec = s_{\alpha_j}(\rho + \mu + \farrootSec) - \rho = \mu + \farrootSec + p \alpha_j,
\end{equation}
whose multiplicity is to be decremented.
\item To rule out case~\ref{itm:further_chamber}, we need to check that $\rho + \mu + \orthrootSec$ lies in $\lie{h}^+$. This entails verifying the inequality\begin{equation} \label{eq:single_step_to_dominant}
\langle \mu + \farrootSec + p \alpha_j, \alpha_k^\vee \rangle \geq -1
\end{equation}
for every simple root $\alpha_k \neq \alpha_j$ (recall that $\langle \rho, \alpha_k^\vee \rangle = 1$ by definition), the case $k = j$ being true by construction.
\item We do \emph{not}, however, need to explicity rule out case~\ref{itm:further_suicide} (which does in fact sometimes occur). Indeed in that case the character $\chi_{\mu+\gamma}=-\chi_{\mu+\beta}$ simply vanishes and does not affect any multiplicities.
\item We then accordingly decrement the multiplicity of $V_{\mu + \orthrootSec}$.
\end{itemize}

Recall that for all dominant $\mu$, for every root $\farrootSec$ and for every simple root $\alpha_j$, we have the identity and inequalities
\begin{equation}
\begin{cases}
\langle \rho, \alpha_j^\vee \rangle = 1; \\
\langle \mu, \alpha_j^\vee \rangle \geq 0; \\
\langle \farrootSec, \alpha_j^\vee \rangle \geq -3.
\end{cases}
\end{equation}
Hence \eqref{eq:negative_p} can hold, i.e.\ the sum of these three terms can be negative, only if $\langle \farrootSec, \alpha_j^\vee \rangle \leq -2$. This can happen only in a handful of cases, that we classify in the next subsection.

\subsection{Scalar products of roots and coroots}
\label{sec:prod_at_least_2}

\begin{lemma} \label{lem:prod_at_least_2}
For two roots $\farrootLem, \refrootLem$, the inequality
\begin{equation}
\langle\farrootLem,\refrootLem^\vee\rangle \leq -2
\end{equation}
happens in exactly the following situations:
\begin{enumerate}
\item \label{itm:equal_roots} whenever $\farrootLem = -\refrootLem$;
\item \label{itm:weakly_orth} whenever $\mathfrak{g}$ has a factor of type $B_n$, $C_n$ or $F_4$, $\refrootLem$ is a short root of that factor, and $\farrootLem = \orthrootLem - \refrootLem$ where $\orthrootLem$ is a root weakly orthogonal to $\refrootLem$ (see Table~\ref{tab:weak_orth_enum} for a complete list of such pairs $(\refrootLem, \orthrootLem)$ with simple $\refrootLem$);
\item \label{itm:g2_short_long} if $\mathfrak{g}$ has a factor of type $G_2$, $\refrootLem$ is a short root of that factor, and $\farrootLem$ is a long root (of that factor) immediately adjacent to $- \refrootLem$.
\end{enumerate}
In these three cases, the scalar product $\langle\farrootLem,\refrootLem^\vee\rangle$ is respectively equal to $-2$, $-2$ and $-3$.
\end{lemma}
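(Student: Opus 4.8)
The plan is to reduce everything to the elementary arithmetic of Cartan integers. The starting point is the standard fact that for any two roots the product $\langle\farrootLem,\refrootLem^\vee\rangle\langle\refrootLem,\farrootLem^\vee\rangle = 4\cos^2\theta$ is an integer in $\{0,1,2,3,4\}$, equal to $4$ exactly when $\farrootLem$ and $\refrootLem$ are proportional (hence $\farrootLem=\pm\refrootLem$), together with the relation $\langle\farrootLem,\refrootLem^\vee\rangle/\langle\refrootLem,\farrootLem^\vee\rangle = \langle\farrootLem,\farrootLem\rangle/\langle\refrootLem,\refrootLem\rangle$. Since roots lying in distinct simple factors are orthogonal, I may assume $\farrootLem$ and $\refrootLem$ belong to the same simple factor, as otherwise the scalar product vanishes.

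First I would enumerate the admissible pairs $(\langle\farrootLem,\refrootLem^\vee\rangle,\langle\refrootLem,\farrootLem^\vee\rangle)$ compatible with $\langle\farrootLem,\refrootLem^\vee\rangle\le -2$. Both Cartan integers share the sign of $\langle\farrootLem,\refrootLem\rangle$, hence are negative here. The proportional case forces $\farrootLem=-\refrootLem$ and $\langle\farrootLem,\refrootLem^\vee\rangle=-2$, which is situation~\ref{itm:equal_roots}. Otherwise the product is at most $3$, so either $\langle\farrootLem,\refrootLem^\vee\rangle=-2$ with $\langle\refrootLem,\farrootLem^\vee\rangle=-1$ (product $2$), or $\langle\farrootLem,\refrootLem^\vee\rangle=-3$ with $\langle\refrootLem,\farrootLem^\vee\rangle=-1$ (product $3$); the values $\langle\farrootLem,\refrootLem^\vee\rangle\le -4$ and the pair $(-3,-2)$ are excluded because they would make the product reach or exceed $4$ while $\farrootLem,\refrootLem$ are non-proportional. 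In the first subcase $\langle\farrootLem,\farrootLem\rangle/\langle\refrootLem,\refrootLem\rangle=2$, so $\refrootLem$ is short and $\farrootLem$ long, which occurs only in the doubly-laced types $B_n$, $C_n$, $F_4$; in the second the length ratio is $3$, confining us to $G_2$.

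For the doubly-laced subcase I would prove the characterization by a root-string argument. Since $\langle\farrootLem,\refrootLem^\vee\rangle=-2$, the $\refrootLem$-string through $\farrootLem$ runs at least up to $\farrootLem+2\refrootLem$, so $\orthrootLem:=\farrootLem+\refrootLem$ and $\orthrootLem+\refrootLem=\farrootLem+2\refrootLem$ are roots, while $\langle\orthrootLem,\refrootLem^\vee\rangle=-2+2=0$ shows $\orthrootLem\perp\refrootLem$; thus $\orthrootLem$ is weakly orthogonal to $\refrootLem$ and $\farrootLem=\orthrootLem-\refrootLem$. Conversely, if $\refrootLem$ is short and $\orthrootLem$ is weakly orthogonal to it, then $\langle\orthrootLem,\refrootLem^\vee\rangle=0$ makes the $\refrootLem$-string through $\orthrootLem$ symmetric, so $\orthrootLem-\refrootLem$ is a root whenever $\orthrootLem+\refrootLem$ is, and $\langle\orthrootLem-\refrootLem,\refrootLem^\vee\rangle=-2$. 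This yields situation~\ref{itm:weakly_orth} together with the value $-2$. The $G_2$ subcase I would settle by direct inspection: with $\refrootLem$ short, the condition $\langle\farrootLem,\refrootLem^\vee\rangle=-3$ is equivalent to $\farrootLem$ being long at angle $150^\circ$ from $\refrootLem$, i.e.\ immediately adjacent to $-\refrootLem$; checking the three positive long roots against the short root confirms that exactly these configurations occur and that the scalar product equals $-3$, giving situation~\ref{itm:g2_short_long}.

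The remaining task, and the only genuinely laborious part, is to certify that Table~\ref{tab:weak_orth_enum} lists all pairs $(\refrootLem,\orthrootLem)$ with $\refrootLem$ a short simple root; this I would do by a direct enumeration in the Bourbaki coordinates of $B_n$, $C_n$ and $F_4$, reading off for each short simple root the roots orthogonal to it whose sum with it is again a root. The main subtlety to watch for throughout is keeping straight which of $\farrootLem,\refrootLem$ is long versus short, since this is precisely what separates the three situations and pins down the exceptional value $-3$ in $G_2$.
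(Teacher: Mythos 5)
Your proposal is correct and takes essentially the same route as the paper: reduce to a single simple factor, pin down the admissible Cartan integers (the paper does this via cosine and length-ratio bounds, you via integrality of the product $\langle\gamma,\alpha^\vee\rangle\langle\alpha,\gamma^\vee\rangle=4\cos^2\theta$ --- a cosmetic difference), then use the $\alpha$-root-string argument to produce the weakly orthogonal root $\beta=\gamma+\alpha$ in the doubly-laced case and settle $G_2$ by direct inspection of the long roots. A minor bonus of your write-up is that you also verify the converse of situation~2 (that for $\alpha$ short and $\beta$ weakly orthogonal to it, $\beta-\alpha$ is indeed a root with Cartan integer $-2$), a direction the paper leaves implicit.
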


\begin{proof}
Clearly we lose no generality in assuming that $\lie{g}$ is simple: indeed any two roots coming from different simple factors have scalar product 0.

If the two roots have the same length (e.g., for simply-laced~$\mathfrak{g}$), $\langle\farrootLem,\refrootLem^\vee\rangle=2\cos\theta \geq -2$, with equality if and only if $\farrootLem=-\refrootLem$.
If $\farrootLem$ is short and~$\refrootLem$ is long, then $|\langle\farrootLem,\refrootLem^\vee\rangle|\geq -2|\farrootLem|/|\refrootLem|>-2$.
Finally, if $\farrootLem$ is long and~$\refrootLem$ is short, then $|\farrootLem|/|\refrootLem|$ equals $\sqrt{2}$ for the Lie algebras $B_n$, $C_n$, $F_4$, and equals $\sqrt{3}$ for~$G_2$.

For the first case, we have $|\langle\farrootLem,\refrootLem^\vee\rangle|\geq -2\sqrt{2}>-3$ so $\langle\farrootLem,\refrootLem^\vee\rangle\geq -2$. Suppose that equality happens. Then $\orthrootLem := \farrootLem + \refrootLem$, being in the $\refrootLem$-root string of $\farrootLem$ (see \cite[Prop.~2.29]{Kna96}), must be a root or~$0$. The latter is impossible as $|\farrootLem| > |\refrootLem|$. We then compute $\langle \orthrootLem, \refrootLem \rangle = \langle \orthrootLem, \refrootLem^\vee \rangle = 0$, and note that $\orthrootLem + \refrootLem = \farrootLem + 2\refrootLem$ is also in the $\refrootLem$-root string of $\farrootLem$, hence a root. So $\orthrootLem$ is indeed a root weakly orthogonal to $\refrootLem$.

For the $G_2$~case, if we fix a short root, we may simply examine each of the six long roots (see \autoref{fig:root-g2}). We see that we then have $\langle\farrootLem,\refrootLem^\vee\rangle \in \{-3,0,3\}$, with the value $-3$ attained precisely for the two long roots adjacent to $-\refrootLem$.
\end{proof}

\subsection{Finding all possible cancellations}

We now finish the proof of Theorem~\ref{thm:adjoints_in_tensor_square}, by examining each of the three cases of Lemma~\ref{lem:prod_at_least_2}, finding the corresponding possible solutions of \eqref{eq:negative_p} and identifying the corresponding cancelled summands.

\begin{itemize}
\item First consider cases \ref{lem:prod_at_least_2}.\ref{itm:equal_roots} and \ref{lem:prod_at_least_2}.\ref{itm:weakly_orth}. Then $\langle \farrootSec, \alpha_j^\vee \rangle = -2$, so necessarily $\langle \mu, \alpha_j^\vee \rangle = 0$ and then $p = 1$.
\begin{itemize}
\item In case \ref{lem:prod_at_least_2}.\ref{itm:equal_roots} (i.e.\ $\farrootSec = -\alpha_j$), obviously 
$\orthrootSec = \farrootSec + \alpha_j = 0$. So we must decrease by~$1$ the coefficient of~$\chi_\mu$ in~\eqref{char-adj-Vmu-2}. This coefficient is initially equal to $\rank(\mathfrak{g})$, and is decremented exactly once for each root $\alpha_j$ such that $\langle \mu, \alpha_j^\vee \rangle = 0$. This yields the first line (case $\nu = \mubar$) of \eqref{eq:multiplicity_in_tensor_square}.
\item In case \ref{lem:prod_at_least_2}.\ref{itm:weakly_orth},
we compute that the reflected root $\orthrootSec = \farrootSec + p \alpha_j$ defined by \eqref{eq:orthrootSec_def} coincides with the $\orthrootLem = \farrootLem + \refrootLem$ of Lemma~\ref{lem:prod_at_least_2}.\ref{itm:weakly_orth} (applied to $\refrootLem = \alpha_j$).
To establish \eqref{eq:single_step_to_dominant}, note that both $\farrootSec$ and $s_{\alpha_j}(\farrootSec) = \farrootSec + 2\alpha_j$ are roots. By Lemma~\ref{lem:prod_at_least_2}, both of them have scalar product with $\alpha_k^\vee$ greater than or equal to $-2$ (the value $-3$ occurs only for $G_2$, which is not the case here). Moreover for $\farrootSec$ this inequality is strict: this can be checked on a case-by-case basis by looking at Table~\ref{tab:weak_orth_enum}. By averaging, we obtain \eqref{eq:single_step_to_dominant} for $\mu = 0$, hence also for any dominant $\mu$.

This case accounts precisely for case~\ref{itm:weakly_orth_overall} of Theorem~\ref{thm:adjoints_in_tensor_square}.
\end{itemize}

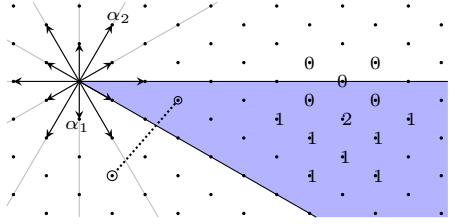
\begin{figure}\centering
  \begin{tikzpicture}[scale=.5,rotate=-90]
    \pgfmathsetmacro\rt{sqrt(3)}
    \fill[blue,opacity=.3] (0,0) -- (3.6,3.6*\rt) -- (3.6,5.6*\rt) -- (0,5.6*\rt) -- cycle;
    \draw (0,0) -- (3.6,3.6*\rt);
    \draw (0,0) -- (0,5.6*\rt);
    \draw[gray!50] (0,0) -- (-2.1,2.1*\rt);
    \draw[gray!50] (0,0) -- (-2.1,2.1/\rt);
    \draw[gray!50] (0,0) -- (-2.1,0);
    \draw[gray!50] (0,0) -- (-2.1,-2.1/\rt);
    \draw[gray!50] (0,0) -- (-1.1,-1.1*\rt);
    \draw[gray!50] (0,0) -- (0,-1.1*\rt);
    \draw[gray!50] (0,0) -- (1.1,-1.1*\rt);
    \draw[gray!50] (0,0) -- (1.1*3,-1.1*\rt);
    \draw[gray!50] (0,0) -- (3.6,0);
    \draw[gray!50] (0,0) -- (3.6,3.6/\rt);
    \coordinate(u) at (1,0);
    \coordinate(v) at (0,\rt);
    \foreach\x in {-2,...,3} \foreach\y in {-1,...,5} {
      \filldraw (\x,\y*\rt) circle (.03);
      \filldraw (\x+.5,\y*\rt+.5*\rt) circle (.03);
    }
    \foreach\th in {0,60,...,300} {
      \draw[-{stealth}, rotate=\th] (0,0) -- (1,0);
      \draw[-{stealth}, rotate=\th] (0,0) -- (0,\rt);
    }
    \node at (1.2,-.05) {\scriptsize$\alpha_1$};
    \node at (-1.7,.6*\rt) {\scriptsize$\alpha_2$};
    \node(A)[circle,draw,inner sep=1pt] at ($.5*(u)+1.5*(v)$) {};
    \node(B)[circle,draw,inner sep=1.3pt] at ($2.5*(u)+.5*(v)$) {};
    \draw[densely dotted,thick](A)--(B);
    \scriptsize
    \node at (1,3*\rt) {\;1};
    \node at (1.5,3.5*\rt) {\;1};
    \node at (2.5,3.5*\rt) {\;1};
    \node at (1,4*\rt) {\;\,2};
    \node at (2,4*\rt) {\;1};
    \node at (1.5,4.5*\rt) {\;1};
    \node at (2.5,4.5*\rt) {\;1};
    \node at (1,5*\rt) {\;1};
    \node at (.5,3.5*\rt) {0};
    \node at (-.5,3.5*\rt) {0};
    \node at (0,4*\rt) {0};
    \node at (.5,4.5*\rt) {0};
    \node at (-.5,4.5*\rt) {0};
  \end{tikzpicture}
  \caption{\label{fig:root-g2}Root system of~$G_2$ (arrows), its integral weights (lattice points), and the dominant Weyl chamber (shaded).
    The closest pair of weights (circle nodes) strictly inside this chamber, and strictly inside another chamber not adjacent to it, are at a distance~$\sqrt{7}|\alpha_1|$, longer than any root.
    On the right of the diagram are shown multiplicities ($2$, $1$ and~$0$) with which each representation appears in the tensor product of $\adj(\mathfrak{g})$ with the representation (whose highest weight shifted by~$\rho$ is) labeled~$2$: one character~$\chi_{\mu+\farrootLem}$ vanishes and four characters $\chi_{\mu+\farrootLem}$ cancel pairwise, as indicated by the explicit multiplicities~$0$ in the diagram.}
\end{figure}

\item Finally consider case \ref{lem:prod_at_least_2}.\ref{itm:g2_short_long} (see \autoref{fig:root-g2} for a picture). Then $\alpha_j = \alpha_1$ is the short simple root of some factor of type $G_2$, and $\langle \farrootSec, \alpha_1^\vee \rangle = -3$. This implies that $\langle \mu, \alpha_1^\vee \rangle$ may be either $0$ or $1$, which yields respectively $p = 2$ or $p = 1$.

We then have $\farrootSec\in\{\alpha_2,-3\alpha_1-\alpha_2\}$ (one of the long roots closest to $-\alpha_1$). For $\alpha_k \neq \alpha_j$ the scalar products are immediate to compute, and they obey~\eqref{eq:single_step_to_dominant}. Indeed for the long root $\alpha_2$ we have 
\begin{equation}
  \langle p\alpha_1+\alpha_2,\alpha_2^\vee\rangle = - p + 2 > -1 \ , \qquad
  \langle (p-3)\alpha_1-\alpha_2,\alpha_2^\vee\rangle = 3 - p - 2 \geq -1 \ ,
\end{equation}
and for the simple roots lying in other simple factors, the scalar product vanishes.

Computing $\orthrootSec = \farrootSec + p \alpha_1$, we find that this case accounts precisely for the cases \ref{itm:g2_1} and~\ref{itm:g2_2} of Theorem~\ref{thm:adjoints_in_tensor_square}.
\end{itemize}

\section{Proof of Theorem~\ref{thm:adj_in_sym_or_antisym}: dualities and symmetries}
\label{sec:dual_and_sym}

Assume now that $\mu = \nu$. Then Theorem~\ref{thm:adjoints_in_tensor_square} gives us the dimension of $\adj(\lie{g}) \otimes (V_\mu)^{\otimes 2}$; and we now want to count the dimension of its symmetric and antisymmetric subspaces.

We know that this dimension can be nonzero only if $\mu - \nubar = \mu - \mubar$ is a root or zero. In fact this difference can never be a root: indeed, it is invariant by the linear map $\mu \mapsto -\mubar$. This map is known to be the so-called longest word of the Weyl group, and is known to map all positive roots to negative roots and vice-versa. In particular it cannot leave any root invariant.

So we assume henceforth that $\mu = \mubar$, i.e.\ $V_\mu$ has an invariant (orthogonal or symplectic) quadratic form.

\subsection{Ad hoc root numbering}
\label{sec:ad_hoc_numbering}

For this section only, we deviate from the Bourbaki numbering. Instead, we renumber the simple roots $\alpha_i$, and correspondingly the fundamental weights $\varpi_i$ and the coordinates $k_i := \langle \mu, \alpha_i^\vee \rangle$ of the highest weight $\mu$, in an ad hoc way. First of all, we put all the vanishing coordinates at the end, so as to have
\begin{equation}
\mu = \sum_{i=1}^m k_{i} \varpi_{i}
\end{equation}
for some integer $m$ with $0 \leq m \leq \rank(\lie{g})$, with $k_{1}, \ldots, k_{m}$ all positive. By Theorem~\ref{thm:adjoints_in_tensor_square}, $m$ is thus the dimension of $(\adj(\lie{g}) \otimes (V_\mu)^{\otimes 2})^{\lie{g}}$.

We then sort the self-dual weights from the non-self-dual weights (whose nonzero coordinates always occur in pairs, as $\mu = \overline{\mu}$ by assumption), so as to have
\begin{equation}
\overline{i} =
\begin{cases}
{i + \ell} &\text{for } i = 1, \ldots, \ell; \\
{i - \ell} &\text{for } i = \ell+1, \ldots, 2\ell; \\
i &\text{for } i = 2\ell+1, \ldots, m,
\end{cases}
\end{equation}
for some integer $\ell$ (using the obvious convention $\varpi_{\overline{i}} = \overline{\varpi_i}$). In particular $k_{\overline{i}}=k_i$.

\subsection{Lie algebra action on factors}
\label{sec:preliminary_basis}

We seek to count trivial summands in the decomposition of $\adj(\mathfrak{g})\otimes S^2 V_\mu$ and $\adj(\mathfrak{g})\otimes\Lambda^2 V_\mu$ into irreducible representations of $\mathfrak{g}$.  By Schur's lemma, this counts the (linearly independent) representation morphisms (a.k.a.\ $\mathfrak{g}$-equivariant maps) from each of these two spaces to~$\CC$.  To determine this number, we shall exhibit a basis of morphisms $\psi_i\colon\adj(\mathfrak{g})\otimes V_\mu^{\otimes 2}\to\CC$ and analyse their symmetry properties.

We consider first a larger space $W = V_{k_{1} \varpi_{1}} \otimes \cdots \otimes V_{k_{m} \varpi_{m}}$ that contains $V_\mu$ as its main irreducible summand, and define the following family of morphisms, indexed by $i = 1, \ldots, m$:
\begin{equation}
\fundef{\phi_i \colon}
{\adj(\lie{g}) \otimes W}
{W}
{x\otimes v_1\otimes \dots \otimes v_m}
{v_1\otimes \dots\otimes x\cdot v_i\otimes \dots \otimes v_m,}
\end{equation}
i.e.\ $\phi_i$ makes $x$ (an element of the representation space of $\adj(\lie{g})$, which is just $\lie{g}$ itself) act only on the $i$-th component in the tensor product.

\subsection{Identification with dual}
\label{sec:duality_maps}

We would now like to ``lower the third index'', i.e.\ to transform these maps $\phi_i: \adj(\lie{g}) \otimes W \to W$ into some maps $\phi_i^\flat: \adj(\lie{g}) \otimes W^{\otimes 2} \to \CC$.

In order to do this, for each $i$, let us choose some explicit representation isomorphism
\begin{equation}
s_{i}: V_{k_{i} \varpi_{i}} \to V_{k_{i} \overline{\varpi_{i}}}^*
\end{equation}
(by Schur's lemma, the choice resides only in the normalization). They satisfy
\begin{align}
\label{eq:g-equivariance}
  s_{i}(x \cdot w) v
  &= (x \cdot s_{i}(w)) v \nonumber \\
  &= - s_{i}(w) (x \cdot v)
\end{align}
for each $i = 1, \ldots, m$, $x \in \lie{g}$, $v \in V_{k_{i} \overline{\varpi_{i}}}$ and $w \in V_{k_{i} \varpi_{i}}$. We then define
\begin{equation}
\label{eq:phi_flat_definition}
\fundef{\phi_i^\flat \colon}
{\adj(\lie{g}) \otimes W^{\otimes 2}} 
{\CC}
{\displaystyle x \otimes \left(\bigotimes_{i=1}^m v_{i}\right) \otimes \left(\bigotimes_{i=1}^m w_{i}\right)}
{\displaystyle s_{\overline{i}}(w_{\overline{i}}) (x \cdot v_{i}) \prod_{j \neq i} s_{\overline{{j}}}(w_{\overline{{j}}}) v_{{j}}.}
\end{equation}

\subsection{Symmetry properties}
\label{sec:symmetry_properties}

We now seek the symmetry properties of $\phi^\flat_i$. Note that for every $i$, the dual map $s_{i}^*: V_{k_{i} \overline{\varpi_{i}}} \to V_{k_{i} \varpi_{i}}^*$ is also a representation morphism, so by Schur's lemma satisfies
\begin{equation}
\label{eq:dual_isomorphism}
s_{i}^* = \eps_{i} s_{\overline{i}} 
\end{equation}
for some family of scalars $\eps_{i}$, which moreover obeys $\eps_{i}\eps_{\overline{i}} = 1$. For each of the self-dual indices (namely $2\ell<i\leq m$), this $\eps_{i}$ does not depend on the normalization of $s_{i} = s_{\overline{i}}$, and is in fact the Frobenius-Schur indicator $\FrobSch(k_{i} \varpi_{i}) = \FrobSch(\varpi_{i})^{k_{i}}$ of the representation $V_{k_{i} \varpi_{i}}$: we remind that it can be defined as
\begin{equation}
\label{eq:Frob_Schur_def}
\FrobSch(\mu) := \begin{cases}
+1 &\text{if $V_\mu$ is orthogonal;} \\
-1 &\text{if $V_\mu$ is symplectic;} \\
0 &\text{if $V_\mu$ fails to be self-dual (irrelevant here).} \\
\end{cases}
\end{equation}
For the indices that come in dual pairs (namely $i \leq 2\ell$), without loss of generality, one can adjust one of the maps $s_{i}$ of each pair (say the one with $i > \ell$) by a scalar factor so as to have $\eps_i = 1$ for all $i = 1, \ldots, \ell$; this will then also yield $\eps_i = \eps_{i-\ell}^{-1} = 1$ for all $i = \ell+1, \ldots, 2\ell$. To summarize, we get:
\begin{equation}
\label{eq:eps_i_signs}
\eps_{i} = \begin{cases}
+1 &\text{for } i = 1, \ldots, 2\ell; \\
\FrobSch(k_{i} \varpi_{i}) &\text{for } i = 2\ell+1, \ldots, m.
\end{cases}
\end{equation}
From the multiplicativity of the Frobenius-Schur indicator and from the well-known fact that $\FrobSch(\varpi + \overline{\varpi}) = +1$ for every integral weight $\varpi$, we then deduce the identity
\begin{equation}
\label{eq:Frob_Schur_identity}
\prod_{i=1}^m \eps_{i} = \FrobSch(\mu).
\end{equation}
Now take any pure tensor $x \otimes v \otimes w \in \adj(\lie{g}) \otimes W^{\otimes 2}$, with $v = \bigotimes_{i=1}^m v_{i}$ and $w = \bigotimes_{i=1}^m w_{i}$. We compute, for each $i = 1, \ldots, m$:
\begin{align}
\phi^\flat_i \left( x \otimes w \otimes v \right)
&= s_{\overline{i}}(v_{\overline{i}}) (x \cdot w_{i}) \prod_{j \neq i} s_{\overline{{j}}}(v_{\overline{{j}}}) w_{{j}} &\text{by def. \eqref{eq:phi_flat_definition}} \nonumber \\
&= s_{\overline{i}}^*(x \cdot w_{i}) (v_{\overline{i}}) \prod_{j \neq i} s_{\overline{{j}}}^*(w_{{j}}) v_{\overline{{j}}} \nonumber \\
&= \left( \prod_{i=1}^m \eps_{i} \right) s_{i}(x \cdot w_{i}) (v_{\overline{i}}) \prod_{j \neq i} s_{{j}}(w_{{j}}) v_{\overline{{j}}}  &\text{by \eqref{eq:dual_isomorphism}} \nonumber \\
&= \FrobSch(\mu)\; s_{i}(x \cdot w_{i}) (v_{\overline{i}}) \prod_{j \neq \overline{i}} s_{\overline{{j}}}(w_{\overline{{j}}}) v_{{j}}  &\text{by \eqref{eq:Frob_Schur_identity}} \nonumber \\
&= - \FrobSch(\mu)\; s_{i}(w_{i}) (x \cdot v_{\overline{i}}) \prod_{j \neq \overline{i}} s_{\overline{{j}}}(w_{\overline{{j}}}) v_{{j}}  &\text{by $\lie{g}$-equivar. \eqref{eq:g-equivariance}} \nonumber \\
&= - \FrobSch(\mu) \phi^\flat_{\overline{i}} \left( x \otimes v \otimes w \right) &\text{by def. \eqref{eq:phi_flat_definition}}.
\end{align}
Thus we obtain that:
\begin{itemize}
\item for every self-dual $i$ (i.e.\ $2\ell < i \leq m$), the map $\phi^\flat_i$ has symmetry properties \emph{opposite} to those of the invariant quadratic form on $V_\mu$: i.e.\ it is symmetric if $V_\mu$ is symplectic, and antisymmetric if $V_\mu$ is orthogonal;
\item for every dual pair $(i, i+\ell)$ (with $i = 1, \ldots, \ell$), among the two maps $\phi^\flat_i \pm \phi^\flat_{i+\ell}$, exactly one is symmetric and exactly one is antisymmetric.
\end{itemize}

\subsection{Suitable basis of $\adj(\lie{g}) \otimes (V_\mu)^{\otimes 2}$}
\label{sec:actual_basis}

For each $i = 1, \ldots, m$, let $\psi_i$ be the restriction of $\phi^\flat_i$ to $\adj(\lie{g}) \otimes V_\mu^{\otimes 2}$, where we identify $V_\mu = V_{\sum_{i=1}^m k_{i} \varpi_{i}}$ with the main irreducible summand of $W = \bigotimes_{i=1}^m V_{k_{i} \varpi_{i}}$. It still remains to verify that these maps are linearly independent.

For each $i = 1, \ldots, m$, let $v_i$ (resp.\ $w_i$) be some highest (resp.\ lowest) weight vector of $V_{k_{i} \varpi_{i}}$. Then it is easy to see that, for each $i$, $s_{\overline{i}}(w_{\overline{i}})$ is still a lowest weight vector of $V^*_{k_{i} \varpi_{i}}$, and to deduce that
\[\forall i = 1, \ldots, m,\quad s_{\overline{i}}(w_{\overline{i}}) v_{i} \neq 0.\]
Furthermore, we also see that $v := v_1 \otimes \cdots \otimes v_m$ (resp.\ $w := w_1 \otimes \cdots \otimes w_m$) is a highest (resp.\ lowest) weight vector of the tensor product representation, hence both of them lie in the main irreducible summand $V_\mu$. Now take any element $x \in \lie{h} \subset \lie{g}$; we then compute, for every $i = 1, \ldots, m$:
\begin{equation}
\psi_i(x \otimes v \otimes w) = \phi^\flat_i(x \otimes v \otimes w) = k_{i} \varpi_{i}(x) C,
\end{equation}
where $C := \prod_{i=1}^m s_{\overline{i}}(w_{\overline{i}}) v_{i} \in \CC$ is some nonzero constant. Since the linear forms $k_{i} \varpi_{i}$ are linearly independent, so are the $\psi_i$.

It follows that the $m$ maps $\psi_1 + \psi_{\ell+1}, \psi_1 - \psi_{\ell+1}, \ldots, \psi_\ell + \psi_{2\ell}, \psi_\ell - \psi_{2\ell},\allowbreak \psi_{2\ell+1}, \ldots, \psi_m$ are linearly independent as well. But the previous discussion shows that, among these maps, the number of symmetric and antisymmetric ones precisely agrees with the respective right-hand-sides of the two lines of \eqref{eq:adj_in_sym_or_antisym}. This concludes the proof of \autoref{thm:adj_in_sym_or_antisym}.

\subsection{Reproducing results of King and Wybourne}
\label{ssec:KW}

We now specialize our \autoref{thm:adj_in_sym_or_antisym} on dimensions of $G$-invariant subspaces of $\adj(G)\otimes S^2 V$ and $\adj(G)\otimes\Lambda^2 V$ to each classical and exceptional Lie group to efficiently reproduce the results in~\cite{KW}, prove one of their conjectures and disprove the other.

For brevity we denote by $b(\mu)$, $b_S(\mu)$, and $b_\Lambda(\mu)$ the dimensions of $G$-invariant subspaces of $\adj(G)\otimes V^{\otimes 2}$, $\adj(G)\otimes S^2 V$ and $\adj(G)\otimes\Lambda^2 V$, respectively.
Throughout, $V$~is an irreducible representation of highest weight~$\mu$, and is either orthogonal or symplectic, as otherwise these dimensions simply vanish.
We refer to \cite[Exercise~4.3.13]{OV90} for a table of the Frobenius-Schur indicators.
We return to the standard Bourbaki numbering of fundamental weights~$\varpi_i$.
\begin{enumerate}
\item For the group $G=SU(k+1)$, conjugation is $\overline{\varpi_i}=\varpi_{k+1-i}$.
  The dimension $b(\mu)$ given in~\eqref{eq:multiplicity_in_tensor_square} is odd if and only if $k$ is odd and $\langle\mu,\alpha_{(k+1)/2}\rangle>0$ since that is the only self-dual fundamental weight.
  In that case, \eqref{eq:adj_in_sym_or_antisym} states that $\{b_S(\mu),b_\Lambda(\mu)\}=\{(b(\mu)\pm 1)/2\}$, with $b_S(\mu)$ being the larger (resp.\ smaller) one if $V$ is symplectic (resp.\ orthogonal).
  Otherwise, $b_S(\mu)=b_\Lambda(\mu)$ is immediate from~\eqref{eq:adj_in_sym_or_antisym}.
  This coincides with \cite[Prop.~5.4]{KW}.

\item The group $G=Spin(4k+2)$\footnote{Note that King and Wybourne talk everywhere about groups $SO(n)$ (which, strictly speaking, only have ``half'' of the possible representations) but work with representations of the corresponding Lie algebras.  Following our convention that $G$ is simply connected, here we rephrase everything in terms of $Spin(n)$.} has a pair of fundamental weights $\varpi_{2k},\varpi_{2k+1}$ that are exchanged by conjugation, while the other fundamental representations are self-dual and orthogonal.  The group thus does not have any symplectic representation, so that \eqref{eq:adj_in_sym_or_antisym} simplifies to $b_S(\mu) = \delta_{\langle\mu,\alpha_{2k}^{\vee}\rangle>0}$ and $b_\Lambda(\mu)=b(\mu)-b_S(\mu)$, which reproduces \cite[Prop.~6.7]{KW}.

\item For the groups $G=Spin(2k+1),USp(2k),Spin(4k),E_7,E_8,F_4,G_2$, all fundamental weights are self-dual so $\{b_S(\mu),b_\Lambda(\mu)\}=\{0,b(\mu)\}$, with $b_S(\mu)$ (resp.\ $b_\Lambda(\mu)$) vanishing for orthogonal (resp.\ symplectic) representations (this statement can also be found as \cite[Cor.~3.4]{PY08}).  Furthermore, the spin groups $Spin(n)$ with $n \not\equiv 3,4,5 \pmod{8}$ and the exceptional groups $G_2,F_4,E_8$ only have orthogonal representations so one always has $b_S(\mu)=0$. This is as stated in \cite[Prop.~6.3--6.5]{KW} for the classical groups, in \cite[Prop.~7.1--7.2]{KW} for $G_2$ and~$E_7$, and conjectured for $F_4$ and $E_8$ in \cite[Conj.~7.3]{KW}.

\item Finally, for the $E_6$ group our general theorem disproves \cite[Conj.~7.4]{KW}, which proposed ``on the basis it has to be said of very little data'' that $b_S(\mu)$ and $b_\Lambda(\mu)$ were respectively the floor and ceiling of $b(\mu)/2$.  The $E_6$ group has no symplectic representation and one has
\begin{equation*}
  b_S(\mu) = \delta_{\langle\mu,\alpha_1^\vee\rangle>0} + \delta_{\langle\mu,\alpha_3^\vee\rangle>0} , \quad
  b_\Lambda(\mu) = b_S(\mu) + \delta_{\langle\mu,\alpha_2^\vee\rangle>0} + \delta_{\langle\mu,\alpha_4^\vee\rangle>0} ,
\end{equation*}
where $b_S(\mu)$ receives one contribution for each of the two pairs of dual coroots, and $b_\Lambda(\mu)$ additionally receives one contribution for each of the self-dual coroots.
If $b$ is odd, then $b_\Lambda(\mu)=b_S(\mu)+1$ as proposed in \cite{KW}, while if $b$ is even one can either have $b_\Lambda(\mu)=b_S(\mu)$ or $b_\Lambda(\mu)=b_S(\mu)+2$.
\end{enumerate}

We may also answer the following question, raised in the introduction of \cite{KW}: ``When does the adjoint $\theta$ occur only in the symmetric or only in the antisymmetric part of the Kronecker square of a given irreducible representation $\lambda$ of $G$?'' From \eqref{eq:adj_in_sym_or_antisym}, it immediately follows that this happens if and only if its highest weight  lies in the subspace generated by the self-dual fundamental weights. In particular, it is true for \emph{all} representations when the opposition involution is trivial (see the case 3 above).

\bibliographystyle{alpha}
\bibliography{/home/ilia/Documents/Travaux_mathematiques/mybibliography.bib}

\noindent I. Smilga, Mathematical Institute, University of Oxford, United Kingdom 

\noindent E-mail: \url{ilia.smilga@normalesup.org}

\noindent B. Le Floch, CNRS and LPTHE, Sorbonne Universite, Paris, France

\noindent E-mail: \url{blefloch@lpthe.jussieu.fr} 

\end{document}